\documentclass[12pt]{article}

\usepackage[utf8]{inputenc}
\usepackage{t1enc}

\usepackage{amsmath, amsthm, amssymb, amsfonts, amscd}
\usepackage{mathtools}
\usepackage{mathrsfs}
\usepackage[mathscr]{eucal}
\usepackage{stmaryrd}
\usepackage{slashed}
\usepackage{thmtools}
\usepackage{accents}

\usepackage{graphicx, graphics, pict2e, epic, epstopdf}
\usepackage{xcolor}
\usepackage{tikz}
\usetikzlibrary{arrows.meta, positioning}
\usepackage{tikz-cd}
\usepackage{tkz-graph}

\usepackage{geometry}
\usepackage{setspace}
\usepackage{indentfirst}
\usepackage{booktabs}
\usepackage{float}
\usepackage{placeins}
\usepackage{subcaption}
\usepackage{enumerate}
\usepackage[symbol]{footmisc}
\usepackage{lastpage}
\usepackage{authblk}
\usepackage{verbatim}
\usepackage{lipsum}
\usepackage{cite}
\RequirePackage{doi}
\allowdisplaybreaks

\newcommand{\E}{\mathscr{E}}

\theoremstyle{plain}

\newtheorem{theorem}{Theorem}[section]

\newtheorem{corollary}[theorem]{Corollary}

\theoremstyle{remark}

\theoremstyle{definition}
\newtheorem{definition}[theorem]{Definition}

\usepackage{graphicx}

\makeatletter
\renewcommand{\maketitle}{
\begin{center}

{\Large\bfseries \@title\par}
\vspace{6mm}

{\large\bfseries \@author\par}
\vspace{4mm}

{\itshape \@address\par}
\vspace{2mm}

{\small\ttfamily \@email\par}
\vspace{5mm}

\vspace{5mm}

\end{center}
}
\makeatother

\makeatletter
\newcommand{\address}[1]{\gdef\@address{#1}}
\newcommand{\email}[1]{\gdef\@email{#1}}
\makeatother

\address{}
\email{}

\usepackage{fancyhdr}

\usepackage[margin=1cm,%
			font=footnotesize,%
			format=hang,%
			labelsep=period,%
			labelfont=bf]{caption}
\allowdisplaybreaks

\title{Graph Energies of Generalized and Shadow--Splitting Graphs}
\author{Ronak B. Dudhat$^{a}$, Vinodray J. Kaneria$^b$, Kalpesh M. Popat$^c$\footnote{Corresponding author.}}
\address{$^{a,b,c}$Department of Mathematics, \\ Saurashtra University,  Rajkot-360005, \\ Gujarat, India.}
\email{ronakdudhat108@gmail.com, kaneriavinodray@gmail.com,  kalpeshmpopat@gmail.com}

\date{\today}

\begin{document}

\maketitle
\thispagestyle{empty}

\begin{abstract}
We extend the notions of the $m$-splitting graph $\mathcal{S}_m(G)$ and the $m$-shadow graph $D_m(G)$ to introduce two new graph operations: the $(p,q)$-generalized splitting graph $\mathcal{S}_{p,q}(G)$ and the $(c,k)$-shadow-splitting graph $\mathcal{H}_{c,k}(G)$. We derive the adjacency energy of these constructions and as an application, identify several new infinite families of equienergetic and borderenergetic graphs.

\vspace{2mm}
\noindent\textbf{Keywords:}  Graph Energy, Equienergetic, Borderenergetic.

\vspace{1mm}
\noindent\textbf{2020 Mathematics Subject Classification:} 05C50, 05C76.
\end{abstract}

\onehalfspacing

\section{Introduction}

Let $G$ be a simple graph with vertex set $V(G)=\{v_1,v_2,\dots,v_n\}$. The adjacency matrix of $G$ is the $n\times n$ matrix $A(G)=[a_{ij}]$, where $a_{ij}=1$ if $v_i$ is adjacent to $v_j$, and $0$ otherwise. Let $\lambda_1, \lambda_2, \dots, \lambda_n$ denote the eigenvalues of $A(G)$; together they constitute the spectrum of $G$. The energy of $G$, denoted by $\E(G)$, is defined as the sum of the absolute values of its eigenvalues, that is,
\[
\E(G)=\sum_{i=1}^n |\lambda_i|.
\]
The concept of graph energy was pioneered by Gutman \cite{Gutman1978} in 1978, originating from H\"uckel molecular orbital theory, where it is used to approximate the total $\pi$-electron energy of conjugated hydrocarbons. In the decades since its introduction, graph energy has emerged as a fundamental topic in spectral graph theory, inspiring the study of various energy-like invariants and complex graph constructions. For a detailed treatment of the subject, we refer the reader to the monographs \cite{li2012graph, brouwer2012, cvetkovic2010}.

Two graphs $G$ and $H$ of the same order are said to be \emph{equienergetic} if $\E(G)=\E(H)$. While cospectral graphs are trivially equienergetic, the construction of non-cospectral equienergetic graphs remains a central area of investigation \cite{Gutman2015,Ramane2004,bran2004,mili2009,vaidya2020,vaidya2024,kalp2022}. Furthermore, a graph $G$ of order $n$ is called \emph{borderenergetic} if its energy equals that of the complete graph, that is, $\E(G) = 2(n-1)$.  The concept of borderenergetic graphs was formally introduced by Gong et al. \cite{gong2015borderenergetic}. Their work motivated a surge of interest in discovering new non-complete borderenergetic graph families, particularly through algebraic techniques and constructive approaches \cite{sha2016, fur2017, tura2017,  deng1, hou2017, deng2, elu2018, dede2024, dedemadem2024}.

Graph operations are instrumental in generating new families of graphs with predictable spectral properties. Significant developments in this direction include the introduction of the $m$-splitting graph $\mathcal{S}_m(G)$ and the $m$-shadow graph $D_m(G)$  \cite{Abdel2013}. Vaidya and Popat \cite{vaidyapopat2017} established foundational results for these constructions, proving the identities $\E(\mathcal{S}_m(G)) = \sqrt{1+4m}\,\E(G)$  and $\E(D_m(G)) = m\,\E(G)$. These results demonstrated that the energy of the resulting graphs remains a simple scaling of the energy of the base graph, providing a systematic way to construct equienergetic and borderenergetic graphs. In the present work, we extend these results by introducing two new graph operators: 
\begin{enumerate}
    \item The ${(p,q)}$-\textbf{Generalized Splitting} graph, denoted by $\mathcal{S}_{p,q}(G)$, which extends the $m$-splitting framework by incorporating $p$ copies of the base graph and $q$ sets of splitting vertices.
    \item The $(c,k)$-\textbf{Shadow-Splitting} graph, denoted by $\mathcal{H}_{c,k}(G)$, which combines the structural features of shadow and splitting graphs into a unified block-matrix construction.
\end{enumerate}

We derive explicit formulas for the energy of these constructions in terms of the energy of the base graph $G$. Furthermore, we demonstrate the effectiveness of these operators by constructing several new infinite families of non-cospectral equienergetic graphs and determining parameter values that yield borderenergetic graphs. We conclude this section by introducing the notation and preliminary results that will be used throughout the remainder of the paper.
\section{Notation and Preliminaries} 
 Throughout this paper, we consider $G$ to be a simple and undirected graph of order $n$. Let $\operatorname{Spec}(M)$ denote the multiset of eigenvalues of a matrix $M$. Let \(\operatorname{rank}(M)\) denote the rank of a matrix \(M\). A partition 
\(\mathcal{P}=\{C_1,C_2,\dots,C_k\}\) of the index set of \(M\) is called an 
\emph{equitable partition} if, for every \(i,j\in\{1,2,\dots,k\}\), the sum of 
the entries in each row of the block \(M_{ij}\) is a constant \(q_{ij}\), 
independent of the choice of the row within \(C_i\). The \emph{quotient matrix} 
\(Q=(q_{ij})\) is the \(k\times k\) matrix whose entries are these common row 
sums. A fundamental result\cite{brouwer2012} asserts that \(\operatorname{Spec}(Q)\) is a 
submultiset of \(\operatorname{Spec}(M)\) . We denote the identity matrix of order $n$ by $I_n$, the $m \times n$ matrix of all ones by $J_{m \times n}$ and the $n \times 1$ column vector of all ones by $\mathbf{1}_n$. The complete graph of order $n$ is denoted by $K_n$, and the complete bipartite graph with partitions of size $m$ and $n$ is denoted by $K_{m,n}$. It is well known that the energy of the complete graph is $\E(K_n) = 2(n-1)$, and the energy of the complete bipartite graph is $\E(K_{m,n}) = 2\sqrt{mn}$. For a vertex $v$ in a graph $G$, the set $N(v)$ is defined as the set of all vertices in $G$ that are adjacent to $v$. The \emph{$m$-Splitting graph} $\mathcal{S}_m(G)$ of a graph $G$ is obtained by adding $m$ new vertices $v_1, v_2, \dots, v_m$ for each vertex $v \in V(G)$, such that each $v_i$ (for $1 \le i \le m$) is adjacent to every vertex in the neighborhood $N(v)$ of $G$. The \emph{$m$-Shadow graph} $D_m(G)$ of a graph $G$ is constructed by taking $m$ copies of $G$, denoted by $G_1, G_2, \dots, G_m$, and joining each vertex $u \in V(G_i)$ to the neighbors of the corresponding vertex $v \in V(G_j)$ for all $1 \le i, j \le m$. For any two matrices $A = [a_{ij}]$ and $B$, the product $A \otimes B$ is defined as the block matrix $[a_{ij}B]$. If $\{\lambda_i\}$ and $\{\mu_j\}$ are the eigenvalues of $A$ and $B$, respectively, then it is a well-known property~\cite{Horn1991} that the eigenvalues of $A \otimes B$ are the products $\{\lambda_i \mu_j\}$.  The \emph{Kronecker product} of two graphs $G$ and $H$, denoted by $G \otimes H$, is the graph with vertex set $V(G) \times V(H)$ where two vertices $(u_1, v_1)$ and $(u_2, v_2)$ are adjacent in $G \otimes H$ if and only if $u_1$ is adjacent to $u_2$ in $G$ and $v_1$ is adjacent to $v_2$ in $H$. A well-known property of this construction is that the adjacency matrix of the product graph is the Kronecker product of the adjacency matrices of the factor graphs, specifically $A(G \otimes H) = A(G) \otimes A(H)$. Consequently, the energy of the Kronecker product is multiplicative, satisfying a known identity~\cite{Balakri2004} which states that $\mathcal{E}(G \otimes H) = \mathcal{E}(G)\mathcal{E}(H)$.
\section{Energy of the \texorpdfstring{$(p,q)$}{(p,q)}-Generalized  Splitting Graph}
In this section, we define the $(p,q)$-\emph{Generalized Splitting} graph and derive its energy. This construction generalizes the $m$-\emph{Splitting} graph by allowing multiple copies of the base graph.

\begin{definition}
    Let $G$ be a graph of order $n$. For $p,q\ge 1$ the \emph{(p,q)-Generalized Splitting} graph, $\mathcal{S}_{p,q}(G)$, is formed by taking $p$ disjoint copies of $G$, as $G^{(1)}, \dots, G^{(p)}$ (write $v_i^{(\ell)}$ for the copy of $v_i$ in $G^{(\ell)}$) and adding $q$ sets of isolated vertices as $U^{(k)}=\{u_1^{(k)},\dots,u_n^{(k)}\},\,k=1,\dots,q$ such that for every $i=1,\dots,n$
$$N(u_i^{(k)}) = \bigcup_{\ell=1}^p N_{G^{(\ell)}}(v_i^{(\ell)}),\;k=1,\dots,q.$$
\end{definition} 
\noindent The following figure illustrates the $(2,2)$-generalized splitting graph $\mathcal{S}_{2,2}(C_4)$:\\
\FloatBarrier 
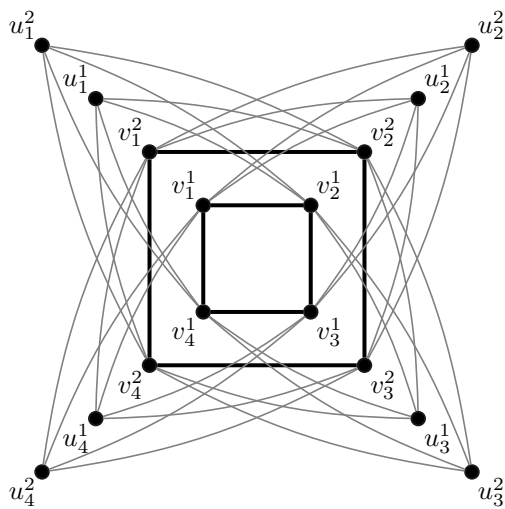
\begin{figure}[!h] 
\centering
\begin{tikzpicture}[scale=1.0, rotate=45, font=\footnotesize\bfseries,
    v_base/.style={circle, draw=black!90, fill=black, inner sep=1.8pt, line width=0.7pt},
    v_copy/.style={circle, draw=black!90, fill=black, inner sep=1.8pt, line width=0.7pt},
    v_split/.style={circle, draw=black!90, fill=black, inner sep=1.8pt, line width=0.7pt},
    edge_G/.style={line width=1.5pt, draw=black}, 
    edge_U/.style={line width=0.6pt, draw=black!50}       
]

\node[v_base] (a1) at (0,1) {};
\node[v_base] (a2) at (1,0) {};
\node[v_base] (a3) at (0,-1) {};
\node[v_base] (a4) at (-1,0) {};

\draw[edge_G] (a1)--(a2)--(a3)--(a4)--(a1)--cycle;

\node[anchor=south east, inner sep=2pt] at (a1) {$v_1^1$};
\node[anchor=south west, inner sep=2pt] at (a2) {$v_2^1$};
\node[anchor=north west, inner sep=2pt] at (a3) {$v_3^1$};
\node[anchor=north east, inner sep=2pt] at (a4) {$v_4^1$};

\node[v_copy] (b1) at (0,2) {};
\node[v_copy] (b2) at (2,0) {};
\node[v_copy] (b3) at (0,-2) {};
\node[v_copy] (b4) at (-2,0) {};

\draw[edge_G] (b1)--(b2)--(b3)--(b4)--(b1)--cycle;

\node[anchor=south east, inner sep=2pt] at (b1) {$v_1^2$};
\node[anchor=south west, inner sep=2pt] at (b2) {$v_2^2$};
\node[anchor=north west, inner sep=2pt] at (b3) {$v_3^2$};
\node[anchor=north east, inner sep=2pt] at (b4) {$v_4^2$};

\node[v_split] (u11) at (0,3) {};
\node[v_split] (u21) at (3,0) {};
\node[v_split] (u31) at (0,-3) {};
\node[v_split] (u41) at (-3,0) {};

\node[v_split] (u12) at (0,4) {};
\node[v_split] (u22) at (4,0) {};
\node[v_split] (u32) at (0,-4) {};
\node[v_split] (u42) at (-4,0) {};

\node[anchor=south east, inner sep=2pt] at (u11) {$u_1^1$};
\node[anchor=south west, inner sep=2pt] at (u21) {$u_2^1$};
\node[anchor=north west, inner sep=2pt] at (u31) {$u_3^1$};
\node[anchor=north east, inner sep=2pt] at (u41) {$u_4^1$};

\node[anchor=south east, inner sep=2pt] at (u12) {$u_1^2$};
\node[anchor=south west, inner sep=2pt] at (u22) {$u_2^2$};
\node[anchor=north west, inner sep=2pt] at (u32) {$u_3^2$};
\node[anchor=north east, inner sep=2pt] at (u42) {$u_4^2$};

\foreach \i in {1,2}{
    \draw[edge_U] (u1\i) to[bend left=10] (a2);
    \draw[edge_U] (u1\i) to[bend right=10] (a4);
    \draw[edge_U] (u1\i) to[bend left=10] (b2);
    \draw[edge_U] (u1\i) to[bend right=10] (b4);
}

\foreach \i in {1,2}{
    \draw[edge_U] (u2\i) to[bend right=10] (a1);
    \draw[edge_U] (u2\i) to[bend left=10] (a3);
    \draw[edge_U] (u2\i) to[bend right=10] (b1);
    \draw[edge_U] (u2\i) to[bend left=10] (b3);
}

\foreach \i in {1,2}{
    \draw[edge_U] (u3\i) to[bend right=10] (a2);
    \draw[edge_U] (u3\i) to[bend left=10] (a4);
    \draw[edge_U] (u3\i) to[bend right=10] (b2);
    \draw[edge_U] (u3\i) to[bend left=10] (b4);
}

\foreach \i in {1,2}{
    \draw[edge_U] (u4\i) to[bend left=10] (a1);
    \draw[edge_U] (u4\i) to[bend right=10] (a3);
    \draw[edge_U] (u4\i) to[bend left=10] (b1);
    \draw[edge_U] (u4\i) to[bend right=10] (b3);
}

\end{tikzpicture}

\caption{The $(2,2)$-generalized splitting graph \(\mathcal{S}_{2,2}(C_4)\).}
\end{figure}  \FloatBarrier 
\noindent It is worth noting that for $p=1$ and $q=m$, the generalized splitting graph $\mathcal{S}_{p,q}(G)$ reduces to the $m$-splitting graph $\mathcal{S}_m(G)$. The adjacency matrix of $\mathcal{S}_{p,q}(G)$ is given by the block matrix of order $(p+q)n$:
\[
A(\mathcal{S}_{p,q}(G)) = 
\begin{bmatrix}
I_p \otimes A(G) & J_{p \times q} \otimes A(G) \\
J_{q \times p} \otimes A(G) & 0_{qn \times q n}
\end{bmatrix}
=
\begin{bmatrix}
I_p & J_{p \times q} \\
J_{q \times p} & 0_q
\end{bmatrix} \otimes A(G).
\]
\begin{theorem}
For any $p, q \ge 1$, the energy of the $(p,q)$-generalized splitting graph is
\[
\E(\mathcal{S}_{p,q}(G)) = \big( p - 1 + \sqrt{1 + 4pq} \big) \E(G).
\]
\end{theorem}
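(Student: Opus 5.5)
The plan is to use the Kronecker factorization $A(\mathcal{S}_{p,q}(G)) = M \otimes A(G)$ displayed just before the statement, where
\[
M=\begin{bmatrix} I_p & J_{p\times q}\\ J_{q\times p} & 0_q\end{bmatrix}.
\]
By the eigenvalue property of Kronecker products recalled in Section 2, the eigenvalues of $A(\mathcal{S}_{p,q}(G))$ are the products $\mu\lambda_j$, where $\mu\in\operatorname{Spec}(M)$ and $\lambda_j\in\operatorname{Spec}(A(G))$. Hence
\[
\E(\mathcal{S}_{p,q}(G))=\Big(\sum_{\mu\in\operatorname{Spec}(M)}|\mu|\Big)\E(G).
\]
So the whole task reduces to showing that $\sum|\mu| = p-1+\sqrt{1+4pq}$.

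\textbf{Null vectors and the eigenvalue $1$.} First I will locate the easy eigenvalues of the symmetric matrix $M$ of order $p+q$. Take a vector $x=(y,0)$ with $y\in\mathbb{R}^p$ and $\mathbf{1}_p^T y=0$. Then $J_{q\times p}y=0$, so $Mx=(y,0)=x$. This gives eigenvalue $1$ with multiplicity at least $p-1$. Next take $x=(0,z)$ with $\mathbf{1}_q^T z=0$. Then $J_{p\times q}z=0$, so $Mx=0$. This gives eigenvalue $0$ with multiplicity at least $q-1$.

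\textbf{The quotient matrix.} The remaining two eigenvalues come from the equitable partition of the index set into the first $p$ and the last $q$ indices. Its quotient matrix is
\[
Q=\begin{bmatrix}1 & q\\ p & 0\end{bmatrix},
\]
with characteristic polynomial $\mu^2-\mu-pq$. Its roots are $\mu_\pm=\frac{1\pm\sqrt{1+4pq}}{2}$. By the quotient-matrix result cited from \cite{brouwer2012}, these are eigenvalues of $M$. Their eigenvectors are of the form $(a\mathbf{1}_p,b\mathbf{1}_q)$, which are orthogonal to the eigenvectors found above. The count is $(p-1)+(q-1)+2=p+q$, so the spectrum of $M$ is complete. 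As a check, the trace is $(p-1)+\mu_++\mu_-=p$, which agrees with $\operatorname{tr}M$.

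\textbf{Summing.} Since $pq\ge1$, we have $\sqrt{1+4pq}>1$, so $\mu_-<0<\mu_+$. Therefore $|\mu_+|+|\mu_-|=\mu_+-\mu_-=\sqrt{1+4pq}$. Adding the $p-1$ ones and the zeros gives $\sum|\mu|=p-1+\sqrt{1+4pq}$, which proves the theorem.

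The only delicate point is making sure the multiplicities account for all $p+q$ eigenvalues. The orthogonality argument handles this; in the degenerate cases $p=1$ or $q=1$ the corresponding eigenspaces are simply empty. The case $p=1$ recovers $\sqrt{1+4m}\,\E(G)$, consistent with \cite{vaidyapopat2017}.
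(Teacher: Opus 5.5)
Your proposal is correct and follows essentially the same route as the paper: you factor $A(\mathcal{S}_{p,q}(G)) = M\otimes A(G)$, determine $\operatorname{Spec}(M)$ as $1$ with multiplicity $p-1$, $0$ with multiplicity $q-1$, and $\frac{1\pm\sqrt{1+4pq}}{2}$, and then sum absolute values. The only difference is that the paper asserts the characteristic polynomial $\mu^{q-1}(\mu-1)^{p-1}(\mu^2-\mu-pq)$ as ``a straightforward computation,'' whereas you justify it, including the multiplicities, using explicit eigenvectors, the quotient matrix $\begin{bmatrix}1&q\\p&0\end{bmatrix}$, and an orthogonality count.
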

\begin{proof}
Let $M = \begin{bmatrix} I_p & J_{p \times q} \\ J_{q \times p} & 0_q \end{bmatrix}$ be the $(p+q) \times (p+q)$ matrix. The eigenvalues of $A(\mathcal{S}_{p,q}(G))$ are the products of the eigenvalues of $M$ and $A(G)$.
A straightforward computation shows that the characteristic polynomial of $M$ is
\[
\det(\mu I_{p+q}-M)
=\mu^{q-1}(\mu-1)^{p-1}(\mu^2-\mu-pq).
\]
It follows that the spectrum of $M$ is
\[
\underbrace{1,\dots,1}_{p-1},\quad
\underbrace{0,\dots,0}_{q-1},\quad
\frac{1\pm\sqrt{1+4pq}}{2}.
\]
The eigenvalues of $\mathcal{S}_{p,q}(G)$ are of the form $\mu \lambda_i$, where $\mu \in Spec(M)$ and $\lambda_i \in Spec(A(G))$. Utilizing the multiplicative property of energy for Kronecker products, we have:
\[
\E\big(\mathcal{S}_{p,q}(G)\big) = \left( \sum_{\mu \in Spec(M)} |\mu| \right) \sum_{i=1}^{n} |\lambda_i|.
\]
For the matrix $M$, the sum of the absolute values of the eigenvalues is:
\begin{align*}
\sum_{\mu \in Spec(M)} |\mu| &= (p-1)|1| + (q-1)|0| + \left| \frac{1 + \sqrt{1 + 4pq}}{2} \right| + \left| \frac{1 - \sqrt{1 + 4pq}}{2} \right| \\
&= (p-1) + \frac{1 + \sqrt{1 + 4pq}}{2} + \frac{\sqrt{1 + 4pq} - 1}{2} \\
&= (p-1) + \sqrt{1 + 4pq}.
\end{align*}
Substituting this into the energy expression for the graph, we obtain:
\[
\E\big(\mathcal{S}_{p,q}(G)\big) = \left( p - 1 + \sqrt{1 + 4pq} \right) \E(G).
\] 
\end{proof}
\section{Energy of the \texorpdfstring{$(c,k)$}{(c,k)}-Shadow-Splitting Graph}
In this section, we introduce the $(c,k)$-\emph{Shadow-Splitting} graph, which combines the structural properties of shadow and splitting graphs. We derive its energy as a function of the energy of the base graph $G$.
\begin{definition}
Let $G$ be a graph of order $n$. For $c,k\ge 1$ the $(c,k)$-\emph{Shadow-Splitting} graph, $\mathcal{H}_{c,k}(G)$, is formed by taking $c$ disjoint copies of $G$, as $G^{(1)}, \dots, G^{(c)}$ (write $v_i^{(\ell)}$ for the copy of $v_i$ in $G^{(\ell)}$) and adding $k$ sets of isolated vertices as $U^{(r)}=\{u_1^{(r)},\dots,u_n^{(r)}\},\,r=1,\dots,k$ such that for every $i=1,\dots,n$:
$$N(u_i^{(r)}) = \bigcup_{v_j \in N_G(v_i)} \{v_j^{(1)}, \dots, v_j^{(c)}\}, \; \text{for } r=1,\dots,k$$ and
$$N(v_i^{(s)}) = \bigcup_{v_j \in N_G(v_i)} \left( \{v_j^{(1)}, \dots, v_j^{(c)}\} \cup \{u_j^{(1)}, \dots, u_j^{(k)}\} \right), \;\text{for } s=1,\dots,c.$$
\end{definition} 
\noindent The following figure illustrates the $(2,2)$-shadow-splitting graph $\mathcal{H}_{2,2}(C_4)$:\\
\FloatBarrier
\begin{figure}[!h]
\centering

\begin{tikzpicture}[scale=1.1, rotate=45, font=\footnotesize\bfseries,
    v_base/.style={circle, draw=black!90, fill=black, inner sep=1.8pt, line width=0.7pt},
    v_shadow/.style={circle, draw=black!90, fill=black, inner sep=1.8pt, line width=0.7pt},
    v_split/.style={circle, draw=black!90, fill=black, inner sep=1.8pt, line width=0.7pt},
    edge_G/.style={line width=1.5pt, draw=black}, 
    edge_S/.style={line width=1.1pt, draw=black}, 
    edge_U/.style={line width=0.6pt, draw=black!50}       
]

\node[v_base] (a1) at (0,1) {};
\node[v_base] (a2) at (1,0) {};
\node[v_base] (a3) at (0,-1) {};
\node[v_base] (a4) at (-1,0) {};

\node[anchor=south east, inner sep=2pt] at (a1) {$v_1^1$};
\node[anchor=south west, inner sep=2pt] at (a2) {$v_2^1$};
\node[anchor=north west, inner sep=2pt] at (a3) {$v_3^1$};
\node[anchor=north east, inner sep=2pt] at (a4) {$v_4^1$};

\node[v_shadow] (b1) at (0,2) {};
\node[v_shadow] (b2) at (2,0) {};
\node[v_shadow] (b3) at (0,-2) {};
\node[v_shadow] (b4) at (-2,0) {};

\node[anchor=south east, inner sep=2pt] at (b1) {$v_1^2$};
\node[anchor=south west, inner sep=2pt] at (b2) {$v_2^2$};
\node[anchor=north west, inner sep=2pt] at (b3) {$v_3^2$};
\node[anchor=north east, inner sep=2pt] at (b4) {$v_4^2$};

\node[v_split] (u11) at (0,3) {};
\node[v_split] (u21) at (3,0) {};
\node[v_split] (u31) at (0,-3) {};
\node[v_split] (u41) at (-3,0) {};

\node[v_split] (u12) at (0,4) {};
\node[v_split] (u22) at (4,0) {};
\node[v_split] (u32) at (0,-4) {};
\node[v_split] (u42) at (-4,0) {};

\node[anchor=south east, inner sep=2pt] at (u11) {$u_1^1$};
\node[anchor=south west, inner sep=2pt] at (u21) {$u_2^1$};
\node[anchor=north west, inner sep=2pt] at (u31) {$u_3^1$};
\node[anchor=north east, inner sep=2pt] at (u41) {$u_4^1$};

\node[anchor=south east, inner sep=2pt] at (u12) {$u_1^2$};
\node[anchor=south west, inner sep=2pt] at (u22) {$u_2^2$};
\node[anchor=north west, inner sep=2pt] at (u32) {$u_3^2$};
\node[anchor=north east, inner sep=2pt] at (u42) {$u_4^2$};


\draw[edge_G] (a1)--(a2)--(a3)--(a4)--(a1)--cycle;
\draw[edge_G] (b1)--(b2)--(b3)--(b4)--(b1)--cycle;

\draw[edge_S] (b1)--(a2)--(b3)--(a4)--(b1)--cycle;
\draw[edge_S] (b2)--(a3)--(b4)--(a1)--(b2)--cycle;


\foreach \i in {1,2}{
    \draw[edge_U] (u1\i) to[bend left=10] (a2);
    \draw[edge_U] (u1\i) to[bend right=10] (a4);
    \draw[edge_U] (u1\i) to[bend left=10] (b2);
    \draw[edge_U] (u1\i) to[bend right=10] (b4);
}

\foreach \i in {1,2}{
    \draw[edge_U] (u2\i) to[bend right=10] (a1);
    \draw[edge_U] (u2\i) to[bend left=10] (a3);
    \draw[edge_U] (u2\i) to[bend right=10] (b1);
    \draw[edge_U] (u2\i) to[bend left=10] (b3);
}

\foreach \i in {1,2}{
    \draw[edge_U] (u3\i) to[bend right=10] (a2);
    \draw[edge_U] (u3\i) to[bend left=10] (a4);
    \draw[edge_U] (u3\i) to[bend right=10] (b2);
    \draw[edge_U] (u3\i) to[bend left=10] (b4);
}

\foreach \i in {1,2}{
    \draw[edge_U] (u4\i) to[bend left=10] (a1);
    \draw[edge_U] (u4\i) to[bend right=10] (a3);
    \draw[edge_U] (u4\i) to[bend left=10] (b1);
    \draw[edge_U] (u4\i) to[bend right=10] (b3);
}

\end{tikzpicture}

\caption{The $(2,2)$-shadow-splitting graph \(\mathcal{H}_{2,2}(C_4)\).}
\end{figure}
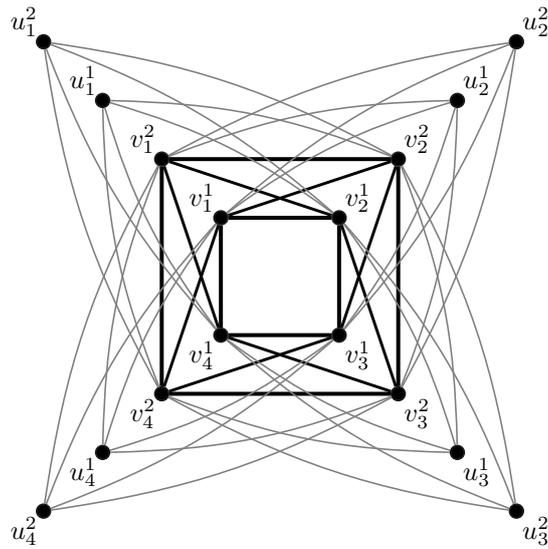 \FloatBarrier 
\noindent Additionally, it is observed that for $c=1$ and $k=m$, the shadow-splitting graph $\mathcal{H}_{c,k}(G)$ reduces to the $m$-splitting graph $\mathcal{S}_m(G)$. The adjacency matrix of $\mathcal{H}_{c,k}(G)$ of order $(c+k)n$ is given by:
\[
A(\mathcal{H}_{c,k}(G)) = 
\begin{bmatrix}
J_c \otimes A(G) & J_{c \times k} \otimes A(G) \\
J_{k \times c} \otimes A(G) & 0_{kn \times k n}
\end{bmatrix}
=
\begin{bmatrix}
J_c & J_{c \times k} \\
J_{k \times c} & 0_k
\end{bmatrix} \otimes A(G).
\]

\begin{theorem}
For any positive integers $c,k \ge 1$, the energy of the $(c,k)$-shadow-splitting graph is
\[
\E(\mathcal{H}_{c,k}(G)) = \sqrt{c^2 + 4ck} \; \E(G).
\]
\end{theorem}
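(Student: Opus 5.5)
The plan mirrors the proof of the previous theorem. We write $A(\mathcal{H}_{c,k}(G)) = N \otimes A(G)$, where
\[
N=\begin{bmatrix} J_c & J_{c\times k}\\ J_{k\times c} & 0_k\end{bmatrix}.
\]
By the Kronecker product property recalled in Section~2, the eigenvalues of $A(\mathcal{H}_{c,k}(G))$ are the products $\mu\lambda_i$, with $\mu\in\operatorname{Spec}(N)$ and $\lambda_i\in\operatorname{Spec}(A(G))$. Hence
\[
\E(\mathcal{H}_{c,k}(G))=\Big(\sum_{\mu\in\operatorname{Spec}(N)}|\mu|\Big)\E(G),
\]
and it suffices to show that the absolute eigenvalue sum of $N$ equals $\sqrt{c^2+4ck}$.

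To find $\operatorname{Spec}(N)$, first observe that $N$ has only two distinct rows: all of the first $c$ rows are equal to $(\mathbf{1}_c^T,\mathbf{1}_k^T)$, and all of the last $k$ rows are equal to $(\mathbf{1}_c^T,0)$. These two rows are linearly independent, so $\operatorname{rank}(N)=2$. Consequently $0$ is an eigenvalue of multiplicity at least $c+k-2$, since $N$ is symmetric and therefore diagonalizable.

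The two remaining eigenvalues come from the equitable partition $\{C_1,C_2\}$ into the first $c$ and the last $k$ indices. Its quotient matrix is
\[
Q=\begin{bmatrix} c & k\\ c & 0\end{bmatrix},
\]
with characteristic polynomial $\mu^2-c\mu-ck$. The roots are $\mu_\pm=\frac{c\pm\sqrt{c^2+4ck}}{2}$, and by the quotient-matrix result cited in Section~2 both belong to $\operatorname{Spec}(N)$. Since $ck>0$, the roots are nonzero and have opposite signs. This confirms they are exactly the two nonzero eigenvalues, so the spectrum is $\mu_+,\mu_-$ together with $0$ of multiplicity $c+k-2$.

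Finally,
\[
|\mu_+|+|\mu_-|=\mu_+-\mu_-=\sqrt{c^2+4ck},
\]
which gives the claim. The only delicate point is justifying that the multiplicities are exactly as stated, i.e.\ that the quotient eigenvalues account for all nonzero eigenvalues. The rank-$2$ observation settles this directly, and as a check the trace of $N$ is $c=\mu_++\mu_-$.
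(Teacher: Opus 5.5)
Your proof is correct and follows essentially the same route as the paper: write $A(\mathcal{H}_{c,k}(G))$ as a Kronecker product of the $(c+k)\times(c+k)$ block matrix with $A(G)$, use rank $2$ to get $c+k-2$ zero eigenvalues, and read the two nonzero eigenvalues off the quotient matrix $\begin{bmatrix} c & k\\ c & 0\end{bmatrix}$. You are slightly more careful than the paper: you use symmetry to turn the rank count into an exact multiplicity, and you note that $ck>0$ makes both quotient eigenvalues nonzero.
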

\begin{proof}
Let $M = \begin{bmatrix} J_c & J_{c \times k} \\ J_{k \times c} & 0_k \end{bmatrix}$ be the $(c+k) \times (c+k)$ matrix. The eigenvalues of $\mathcal{H}_{c,k}(G)$ are the products of the eigenvalues of $M$ and $A(G)$.

Observe that all rows in the first $c$ blocks of $M$ are identical, and all rows in the last $k$ blocks are identical. Since these two row vectors are linearly independent, $rank(M) = 2$. Consequently, $M$ has $c+k-2$ eigenvalues equal to zero. The non-zero eigenvalues of $M$ are the roots of the characteristic equation of the quotient matrix $\tilde{M} = \begin{bmatrix} c & k \\ c & 0 \end{bmatrix}$, given by $\theta^2 - c\theta - ck = 0.$ Therefore, the non-zero eigenvalues are \[\theta_{1,2} = \frac{c \pm \sqrt{c^2 + 4ck}}{2}.\]
The sum of the absolute values of its eigenvalues:
\begin{align*}
\E(M) &= \left| \frac{c + \sqrt{c^2 + 4ck}}{2} \right| + \left| \frac{c - \sqrt{c^2 + 4ck}}{2} \right| \\
&= \frac{c + \sqrt{c^2 + 4ck}}{2} + \frac{\sqrt{c^2 + 4ck} - c}{2} \\
&= \sqrt{c^2 + 4ck}.
\end{align*}
Since $A(\mathcal{H}_{c,k}(G))=M \otimes A(G)
$, we have:
\[
\E(\mathcal{H}_{c,k}(G)) = \sqrt{c^2 + 4ck} \; \E(G).
\]
\end{proof}
\section{Construction of Equienergetic Graphs}
In this section, we utilize the energy identities for the $(p,q)$-generalized splitting graph and the $(c,k)$-shadow-splitting graph to construct infinite families of non-isomorphic equienergetic graphs.
\begin{corollary}
Let $G_1$ and $G_2$ be equienergetic graphs of the same order. Then, for any positive integers $p,q,c,k \ge 1$, the following pairs of graphs are equienergetic:
\begin{enumerate}
    \item $\mathcal{S}_{p,q}(G_1)$ and $\mathcal{S}_{p,q}(G_2)$,
    \item $\mathcal{H}_{c,k}(G_1)$ and $\mathcal{H}_{c,k}(G_2)$.
\end{enumerate}
\end{corollary}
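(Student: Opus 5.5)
The corollary follows directly from the two energy identities proved above. I will apply them and check the order condition that equienergeticity requires.

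Suppose $G_1$ and $G_2$ both have order $n$ and satisfy $\E(G_1)=\E(G_2)$.

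For part (1), fix $p,q\ge 1$. By the energy formula for the $(p,q)$-generalized splitting graph,
\[
\E(\mathcal{S}_{p,q}(G_1)) = \big(p-1+\sqrt{1+4pq}\big)\E(G_1) = \big(p-1+\sqrt{1+4pq}\big)\E(G_2) = \E(\mathcal{S}_{p,q}(G_2)).
\]
Both graphs have order $(p+q)n$, which can be read off from the block form of $A(\mathcal{S}_{p,q}(G))$. Hence they are equienergetic.

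For part (2), fix $c,k\ge 1$ and use the identity $\E(\mathcal{H}_{c,k}(G))=\sqrt{c^2+4ck}\,\E(G)$. This gives $\E(\mathcal{H}_{c,k}(G_1))=\E(\mathcal{H}_{c,k}(G_2))$. Both graphs have order $(c+k)n$, so again the two graphs are equienergetic.

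There is no real obstacle: the scaling factor depends only on the parameters and not on $G$, so equal base energies give equal energies after the construction. The section's wider goal is a family of non-isomorphic or non-cospectral pairs, and that needs a little more. I would get it from the Kronecker structure $A=M\otimes A(G)$: the spectrum of the new graph is the multiset $\{\mu\lambda_i\}$. If $G_1$ and $G_2$ are non-cospectral, I would compare the largest eigenvalues, $\mu_{\max}\lambda_1(G_1)$ against $\mu_{\max}\lambda_1(G_2)$, or compare spectral moments. This shows the spectra differ whenever $\lambda_1(G_1)\neq\lambda_1(G_2)$. The remaining case of equal spectral radius would be the delicate one, and I would handle it by comparing the full multisets. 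That strengthening is beyond what the corollary itself asserts.
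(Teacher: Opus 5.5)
Your proof is correct and matches the paper's argument: the scaling factors $p-1+\sqrt{1+4pq}$ and $\sqrt{c^2+4ck}$ depend only on the parameters, so equal base energies give equal energies, and your check that both graphs have order $(p+q)n$ (resp.\ $(c+k)n$) fills in a detail the paper leaves implicit. Your closing remarks on non-cospectrality are not needed, since the paper's definition of equienergetic requires only equal order and equal energy.
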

\begin{proof}
Since $G_1$ and $G_2$ are equienergetic, we have
\[
\E(G_1)=\E(G_2).
\]
Using the energy formula for the $(p,q)$-generalized splitting graph, we obtain
\[
\E\big(\mathcal{S}_{p,q}(G_1)\big)
= \big(p-1+\sqrt{1+4pq}\big)\E(G_1)
\]
and
\[
\E\big(\mathcal{S}_{p,q}(G_2)\big)
= \big(p-1+\sqrt{1+4pq}\big)\E(G_2).
\]
Therefore,
\[
\E\big(\mathcal{S}_{p,q}(G_1)\big)=\E\big(\mathcal{S}_{p,q}(G_2)\big).
\]
Similarly, by the energy formula for the $(c,k)$-shadow-splitting graph, we have
\[
\E\big(\mathcal{H}_{c,k}(G_1)\big)
= \sqrt{c^2+4ck}\,\E(G_1)
= \sqrt{c^2+4ck}\,\E(G_2)
= \E\big(\mathcal{H}_{c,k}(G_2)\big).
\]
\end{proof}
\begin{corollary}
For $t,m \ge 1$ and $k \in \{1, -1\}$ the graphs $\mathcal{S}_{p_1, q_1}(G)$ and $\mathcal{S}_{p_2, q_2}(G)$ are equienergetic, where $p_1 = (5t-2)^2 m + k(5t-2)$,\; $q_1 = m$,\; $p_2 = 5t^2 m + k t$ and $q_2 = 5(2t-1)^2 m + k(4t-2)$.
\end{corollary}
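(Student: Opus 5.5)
The plan is to apply Theorem~3.2 (the energy formula for $\mathcal{S}_{p,q}(G)$) to both parameter pairs. This reduces the claim to the scalar identity
\[
p_1-1+\sqrt{1+4p_1q_1}=p_2-1+\sqrt{1+4p_2q_2}.
\]
The key observation is that $k\in\{1,-1\}$ gives $k^2=1$. Because of this, both radicands should be perfect squares of integers linear in $m$, so the square roots disappear and both sides become polynomials in $t,m,k$ that can be compared directly. I would also check that the two graphs have the same order $(p_i+q_i)n$, so that the word ``equienergetic'' applies in its proper sense. Finally, I would check that all parameters are at least $1$ so that the theorem can be invoked.

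\textbf{First side.} Put $a=5t-2$, so $p_1=a^2m+ka$ and $q_1=m$. Then
\[
1+4p_1q_1=4a^2m^2+4kam+1=(2am+k)^2 .
\]
Since $a\ge 3$ and $m\ge1$, we have $2am+k>0$, hence $\sqrt{1+4p_1q_1}=2am+k$. Therefore
\[
p_1+\sqrt{1+4p_1q_1}=m(a^2+2a)+k(a+1)=m(25t^2-10t)+k(5t-1).
\]

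\textbf{Second side.} This is the main step. Put $b=2t-1$, so $p_2=5t^2m+kt$ and $q_2=5b^2m+2kb$. Expanding with $k^2=1$ gives
\[
1+4p_2q_2=100t^2b^2m^2+20ktbm(2t+b)+1+8tb .
\]
I would guess the form $(10tbm+ky)^2$. Matching the middle term forces $y=2t+b=4t-1$. One then checks that $y^2=16t^2-8t+1=1+8tb$, so the constant terms agree as well. Hence $\sqrt{1+4p_2q_2}=10tbm+k(4t-1)$, which is positive. It follows that
\[
p_2+\sqrt{1+4p_2q_2}=5t^2m+kt+10t(2t-1)m+k(4t-1)=m(25t^2-10t)+k(5t-1).
\]
This coincides with the first side. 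Subtracting $1$ from both sides and multiplying by $\E(G)$ then gives equality of the energies.

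\textbf{Side checks.} For the orders, $p_1+q_1=m(a^2+1)+ka$. On the other side, $p_2+q_2=m(5t^2+5b^2)+k(t+2b)=m(25t^2-20t+5)+k(5t-2)$. Since $a^2+1=25t^2-20t+5$ and $t+2b=5t-2=a$, these are equal. For positivity of the parameters, the worst case is $k=-1$, $t=1$. There $p_1=9m-3$ and $q_1=m$, while $p_2=5m-1$ and $q_2=5m-2$, all of which are at least $1$.
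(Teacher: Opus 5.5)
Your proposal is correct and follows essentially the same route as the paper. Both apply the energy formula for $\mathcal{S}_{p,q}(G)$ and use $k^2=1$ to recognize the radicands as $(2(5t-2)m+k)^2$ and $(10t(2t-1)m+k(4t-1))^2$. Both then reduce each side to $\bigl((25t^2-10t)m+5kt-k-1\bigr)\E(G)$ and check that the orders agree. Your substitutions $a=5t-2$, $b=2t-1$, and your explicit checks that the square roots are positive and that all parameters are at least $1$, are small refinements the paper leaves implicit.
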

\begin{proof}
To establish that these graphs are equienergetic, we first verify that they have the same order by examining the sum of their parameters. Direct expansion yields
$p_1+q_1= (5t-2)^2m + k(5t-2) + m 
= (25t^2 - 20t + 5)m + 5kt - 2k,
$
and
$
p_2 + q_2 = 5t^2m + kt + 5(2t-1)^2m + k(4t-2) 
= (25t^2 - 20t + 5)m + 5kt - 2k.
$
Since $p_1 + q_1 = p_2 + q_2$, the two graphs have an equal number of vertices.  The energy of $\mathcal{S}_{p_1, q_1}(G)$ is:
\begin{align*}
\mathscr{E}(\mathcal{S}_{p_1, q_1}(G)) &= \left( \sqrt{1+4p_1 q_1} + p_1 - 1 \right) \mathscr{E}(G) \\
&= \left( \sqrt{1+4((5t-2)^2 m + k(5t-2))m} + (5t-2)^2 m + k(5t-2) - 1 \right) \mathscr{E}(G) \\
&=\left(\sqrt{k^2 + 4(5t-2)^2m^2 + 4km(5t-2)}  + (5t-2)^2 m + k(5t-2) - 1\right)\mathscr{E}(G) \\
&= \left( \sqrt{(2(5t-2)m + k)^2} + (25t^2 - 20t + 4)m + 5kt - 2k - 1 \right) \mathscr{E}(G) \\
&= \left((10t-4)m + k + 25t^2 m - 20tm + 4m + 5kt - 2k - 1\right) \mathscr{E}(G) \\
&= \left((25t^2 - 10t)m + 5kt - k - 1\right) \mathscr{E}(G).
\end{align*}
The energy of $\mathcal{S}_{p_2, q_2}(G)$ is:
\begin{align*}
\mathscr{E}(\mathcal{S}_{p_2, q_2}(G)) &= \left( \sqrt{1+4p_2 q_2} + p_2 - 1 \right) \mathscr{E}(G) \\
&= \left( \sqrt{1+4(5t^2 m + kt)(5(2t-1)^2 m + k(4t-2))} + 5t^2 m + kt - 1 \right) \mathscr{E}(G) \\
&= \left( \sqrt{(10t(2t-1)m + k(4t-1))^2} + 5t^2 m + kt - 1 \right) \mathscr{E}(G) \\
&= \left(20t^2 m - 10tm + 4kt - k + 5t^2 m + kt - 1\right) \mathscr{E}(G) \\
&= \left((25t^2 - 10t)m + 5kt - k - 1\right) \mathscr{E}(G).
\end{align*}
Therefore,
\[
\E\big(\mathcal{S}_{p_1,q_1}(G)\big)=\E\big(\mathcal{S}_{p_2,q_2}(G)\big),
\]
\end{proof}
\begin{corollary}
For $m > t \ge 1$, the graphs $\mathcal{H}_{m+t, 2m-t}(G)$ and $\mathcal{H}_{3m-t, t}(G)$ are equienergetic.
\end{corollary}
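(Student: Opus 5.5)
Both graphs have order $(c+k)n$, and $(m+t)+(2m-t)=3m=(3m-t)+t$, so they have the same number of vertices, namely $3mn$. The first step is to record this equality of orders, since equienergetic graphs are required to have the same order.

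Next we apply the energy formula for the $(c,k)$-shadow-splitting graph, $\E(\mathcal{H}_{c,k}(G))=\sqrt{c^2+4ck}\,\E(G)$. This reduces the claim to showing that $c^2+4ck$ takes the same value for $(c,k)=(m+t,2m-t)$ and for $(c,k)=(3m-t,t)$. It is convenient to write $c^2+4ck=c(c+4k)$.

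For the first pair we have $c+4k=(m+t)+4(2m-t)=9m-3t=3(3m-t)$, hence
\[
c(c+4k)=3(m+t)(3m-t).
\]
For the second pair we have $c+4k=(3m-t)+4t=3m+3t=3(m+t)$, hence
\[
c(c+4k)=3(3m-t)(m+t).
\]
These two quantities coincide, so the square roots are equal and the two energies agree.

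The hypothesis $m>t\ge 1$ is what guarantees that all parameters $m+t$, $2m-t$, $3m-t$, $t$ are positive integers, so both graphs are well defined. The only step requiring care is spotting the factorization $c(c+4k)$, which makes the symmetry between the two parameter pairs evident. One may also remark that the two graphs are generally non-isomorphic, since $m>t$ forces $m+t\neq 3m-t$ and the numbers of copies of $G$ differ. However, the statement only asserts equienergeticity.
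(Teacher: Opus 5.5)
Your proof is correct and follows the paper's route: you check that both orders equal $3mn$, then use $\E(\mathcal{H}_{c,k}(G))=\sqrt{c^2+4ck}\,\E(G)$ to reduce the claim to showing that $c^2+4ck$ agrees for the two parameter pairs. The paper expands both to $9m^2+6mt-3t^2$, while your factorization $c(c+4k)=3(m+t)(3m-t)$ reaches the same value and makes the symmetry between the pairs explicit.
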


\begin{proof}
Let $H_1 = \mathcal{H}_{m+t, 2m-t}(G)$ and $H_2 = \mathcal{H}_{3m-t, t}(G)$. We first observe that both graphs have the same order. Indeed, the number of vertices in $H_1$ is given by $((m+t) + (2m-t))n = 3mn$, which is identical to the order of $H_2$ given by $((3m-t) + t)n = 3mn$. The energy of $H_1$ is calculated as
\begin{align*}
\E(H_1) &= \sqrt{(m+t)^2 + 4(m+t)(2m-t)}\; \E(G) \\
&= \sqrt{m^2 + 2mt + t^2 + 8m^2 + 4mt - 4t^2}\;\E(G) \\
&= \sqrt{9m^2 + 6mt - 3t^2}\; \E(G).
\end{align*}
Similarly, for the graph $H_2$, the energy is 
\begin{align*}
\E(H_2) &= \sqrt{(3m-t)^2 + 4(3m-t)t}\; \E(G) \\
&= \sqrt{9m^2 - 6mt + t^2 + 12mt - 4t^2}\; \E(G) \\
&= \sqrt{9m^2 + 6mt - 3t^2}\; \E(G).
\end{align*}
Since $\E(H_1) = \E(H_2)$ and the parameters $c_1 = m+t$ and $c_2 = 3m-t$ are distinct for $m > t$, the two graphs are non-isomorphic. We thus conclude that $H_1$ and $H_2$ are equienergetic.
\end{proof}
\begin{corollary}
The generalized splitting graph $\mathcal{S}_{p,q}(G)$ is equienergetic with the shadow graph $D_{p+q}(G)$ if and only if $q = 4p - 2$.
\end{corollary}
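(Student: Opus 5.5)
Since $\mathcal{S}_{p,q}(G)$ and $D_{p+q}(G)$ both have $(p+q)n$ vertices, the comparison is fair, and the argument is a direct comparison of the two energy formulas. By the first theorem of Section 3, $\E(\mathcal{S}_{p,q}(G)) = (p-1+\sqrt{1+4pq})\,\E(G)$, and by the result of Vaidya and Popat quoted in the introduction, $\E(D_{p+q}(G)) = (p+q)\,\E(G)$. (The latter also follows from the observation that $A(D_m(G)) = J_m\otimes A(G)$ and $\operatorname{Spec}(J_m)=\{m,0^{(m-1)}\}$, together with the Kronecker product property of Section 2.) We must assume $\E(G)\neq 0$, i.e.\ $G$ has at least one edge; for an edgeless $G$ both energies vanish and the statement is vacuous. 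Under this assumption, equienergeticity is equivalent to the scalar identity
\[
p-1+\sqrt{1+4pq} = p+q, \qquad\text{i.e.}\qquad \sqrt{1+4pq} = q+1 .
\]

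Both sides are positive, so squaring is reversible. It gives $1+4pq = q^2+2q+1$, that is, $q(4p-q-2)=0$. Since $q\ge 1$, this holds exactly when $q=4p-2$. Every step is an equivalence, so this yields both directions of the ``if and only if''. As a sanity check, the sufficiency direction can be written out explicitly: substituting $q=4p-2$ gives $1+4pq = 16p^2-8p+1=(4p-1)^2$, so the coefficient equals $p-1+4p-1 = 5p-2 = p+q$.

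The only delicate point is the hidden hypothesis $\E(G)\neq 0$, which is needed to cancel $\E(G)$ in the ``only if'' direction. I will state this hypothesis explicitly: $G$ is non-empty, i.e.\ has an edge. The remaining steps are routine algebra.
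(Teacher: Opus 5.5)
Your proof is correct and follows the paper's argument: you equate $p-1+\sqrt{1+4pq}$ with $p+q$, square, and cancel $q$ to obtain $q=4p-2$. Your remarks that squaring is reversible because both sides are positive, and that $\E(G)\neq 0$ is needed to cancel $\E(G)$, make explicit what the paper leaves implicit. One wording fix: for edgeless $G$ the statement is not vacuous but false in the ``only if'' direction. For example, $p=q=1$ gives two graphs of energy $0$ although $q\neq 4p-2$, which is exactly why your added hypothesis is necessary.
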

\begin{proof}
Recall that the energy of the $(p+q)$-shadow graph is given by $\E(D_{p+q}(G)) = (p+q)\E(G)$. Based on the energy formula for $\mathcal{S}_{p,q}(G)$, the two graphs are equienergetic if and only if
\[
\sqrt{1 + 4pq} + p - 1 = p + q \Rightarrow \sqrt{1 + 4pq} = q + 1.
\]
Squaring both sides of this expression yields $1 + 4pq = q^2 + 2q + 1$. Rearranging the terms, we have $4pq = q^2 + 2q$, and dividing by $q$ to find $4p = q + 2$. This implies that the condition for equienergeticity is $q = 4p - 2$.
\end{proof}
\begin{corollary}
For $c \ge 2$ and $k \ge 1$, the shadow-splitting graph $\mathcal{H}_{c,k}(G)$ and the shadow graph $D_{c+k}(G)$ are equienergetic if and only if $k = 2c$.
\end{corollary}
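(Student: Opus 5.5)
We will argue exactly as in the preceding corollary for $\mathcal{S}_{p,q}(G)$, comparing the two energy formulas. By the theorem on the $(c,k)$-shadow-splitting graph, $\E(\mathcal{H}_{c,k}(G))=\sqrt{c^2+4ck}\,\E(G)$. The result of Vaidya and Popat gives $\E(D_{c+k}(G))=(c+k)\E(G)$. Both graphs have order $(c+k)n$, so only the energies need to be compared. We assume $\E(G)\neq 0$, i.e.\ $G$ has at least one edge; this is implicit in the statement, since for an empty $G$ every pair would trivially be equienergetic.

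Dividing by $\E(G)$, equienergeticity is equivalent to
\[
\sqrt{c^2+4ck}=c+k .
\]
Both sides are positive, so squaring is reversible. Squaring gives $c^2+4ck=c^2+2ck+k^2$, that is, $2ck=k^2$. Since $k\ge 1$, we may divide by $k$ and obtain $k=2c$. Conversely, if $k=2c$, then $\sqrt{c^2+8c^2}=3c=c+k$, so the energies agree. This proves both directions.

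The only delicate points are bookkeeping. First, squaring must be justified as an equivalence, which holds because both sides are nonnegative. Second, the division by $k$ and by $\E(G)$ must be legitimate.

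The hypothesis $c\ge 2$ is not needed for the computation itself. Presumably it is there to avoid the degenerate case $c=1$, where $\mathcal{H}_{1,k}(G)=\mathcal{S}_k(G)$. In the write-up we will simply note that the argument is valid for all $c,k\ge1$, and that the restriction only serves to keep $\mathcal{H}_{c,k}(G)$ distinct from an ordinary splitting graph.
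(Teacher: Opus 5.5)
Your proof is correct and follows the paper's argument: set $\sqrt{c^2+4ck}\,\E(G)$ equal to $(c+k)\E(G)$, square, and divide by $k$ to obtain $k=2c$. Your extra care (requiring $\E(G)\neq 0$, noting that squaring is reversible, and checking the converse) fills in points the paper leaves implicit. You are also right that the hypothesis $c\ge 2$ plays no role in the computation.
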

\begin{proof}
Utilizing the energy formula for the $(c+k)$-shadow graph, $\E(D_{c+k}(G)) = (c+k)\E(G)$, the two graphs are equienergetic if and only if
\[
\sqrt{c^2 + 4ck} = c + k.
\]
By squaring both sides of this identity, we obtain the quadratic expression $c^2 + 4ck = c^2 + 2ck + k^2$. This simplifies directly to $2ck = k^2$, and dividing by $k$ to yield $2c = k$.
\end{proof}
\begin{corollary}
The generalized splitting graph $\mathcal{S}_{2,1}(G)$ is equienergetic with the Kronecker product $G \otimes K_3$.
\end{corollary}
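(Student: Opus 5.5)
The plan is to compare the two energies directly, using the formula for $\E(\mathcal{S}_{p,q}(G))$ from the theorem of Section 3 and the multiplicative identity $\E(G\otimes H)=\E(G)\E(H)$ recalled in Section 2. First we check that the orders agree. The graph $\mathcal{S}_{2,1}(G)$ has $(p+q)n=3n$ vertices, and $G\otimes K_3$ has $|V(G)|\cdot|V(K_3)|=3n$ vertices.

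Next we compute the energy of $\mathcal{S}_{2,1}(G)$. Taking $p=2$ and $q=1$ in the theorem gives
\[
\E(\mathcal{S}_{2,1}(G))=\big(2-1+\sqrt{1+8}\big)\E(G)=(1+3)\E(G)=4\,\E(G).
\]
We can confirm this from the spectrum of the $3\times3$ matrix $M=\begin{bmatrix} I_2 & J_{2\times1}\\ J_{1\times2} & 0\end{bmatrix}$. The characteristic polynomial is $(\mu-1)(\mu^2-\mu-2)=(\mu-1)(\mu-2)(\mu+1)$, so $\operatorname{Spec}(M)=\{2,1,-1\}$ and $\sum|\mu|=4$.

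For the product we use $\E(K_3)=2(3-1)=4$, which gives
\[
\E(G\otimes K_3)=\E(G)\,\E(K_3)=4\,\E(G).
\]
The two energies therefore coincide, and the graphs are equienergetic.

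No step here is a real obstacle; the argument is a direct substitution. One structural remark may be worth adding. Since $\operatorname{Spec}(M)=\{2,1,-1\}$ while $\operatorname{Spec}(K_3)=\{2,-1,-1\}$, the two graphs are in general not cospectral whenever $G$ has a nonzero eigenvalue. The eigenvalue $\lambda$ of $G$ contributes $\{2\lambda,\lambda,-\lambda\}$ in the first graph and $\{2\lambda,-\lambda,-\lambda\}$ in the second. These multisets differ as soon as $\lambda\neq0$, because the first has $\lambda$ with multiplicity $1$ and $-\lambda$ with multiplicity $1$, whereas the second has $-\lambda$ twice. This remark supports the claim that the pair is non-trivially equienergetic, but it is not needed for the equality of energies itself.
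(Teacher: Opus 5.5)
Your proof of the corollary is correct and is essentially the paper's argument. Both graphs have order $3n$, the Section 3 theorem with $p=2$, $q=1$ gives $\E(\mathcal{S}_{2,1}(G))=(1+\sqrt{9})\E(G)=4\E(G)$, and multiplicativity with $\E(K_3)=4$ gives $\E(G\otimes K_3)=4\E(G)$.

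Your closing structural remark, however, is false and should be removed or corrected.

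\textbf{Why the eigenvalue comparison fails.} Write $S=\operatorname{Spec}(G)$. The full spectra are the multiset unions $2S\uplus S\uplus(-S)$ and $2S\uplus(-S)\uplus(-S)$. Comparing the three-element blocks one eigenvalue at a time ignores that the block coming from $\lambda$ can be compensated by the block coming from $-\lambda$. After cancelling the common part $2S\uplus(-S)$, the two graphs are cospectral exactly when $S=-S$, i.e.\ exactly when $G$ is bipartite.

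\textbf{For bipartite $G$ they are even isomorphic.} Let $X,Y$ be the colour classes and $V(K_3)=\{a,b,c\}$. Define a bijection as follows:
\begin{itemize}
\item for $x\in X$, send $x^{(1)},x^{(2)},u_x^{(1)}$ to $(x,b),(x,a),(x,c)$;
\item for $y\in Y$, send $y^{(1)},y^{(2)},u_y^{(1)}$ to $(y,a),(y,b),(y,c)$.
\end{itemize}
Above each edge $xy$ of $G$, the graph $\mathcal{S}_{2,1}(G)$ has exactly six edges: $x^{(1)}y^{(1)}$, $x^{(2)}y^{(2)}$, $u_x^{(1)}y^{(1)}$, $u_x^{(1)}y^{(2)}$, $x^{(1)}u_y^{(1)}$ and $x^{(2)}u_y^{(1)}$. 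The bijection maps these onto the six edges $(x,s)(y,t)$ with $s\neq t$ of $G\otimes K_3$. For example, $G=K_2$ gives $C_6$ on both sides.

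So the pair is non-cospectral only for non-bipartite $G$. This does not affect the corollary, which asserts only equal order and equal energy.
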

\begin{proof}
From the energy formula for the generalized splitting graph, setting $p=2$ and $q=1$ yields 
\[
\E(\mathcal{S}_{2,1}(G)) = \left( \sqrt{1 + 4 \cdot 2 \cdot 1} + 2 - 1 \right) \E(G) = 4 \E(G).
\]
We now consider the complete graph $K_3$. It is well known that $\E(K_3) = 4$. Based on the properties of the Kronecker product, specifically that $\E(A \otimes B) = \E(A) \E(B)$, it follows that
\[
\E(G \otimes K_3) = \E(G) \E(K_3) = 4 \E(G).
\]
Since $\E(\mathcal{S}_{2,1}(G)) = \E(G \otimes K_3)$ and both graphs are of order $3n$, we conclude that they are equienergetic.
\end{proof}
\begin{corollary}
For $m \ge 1$, the generalized splitting graph $\mathcal{S}_{2m, 8m-2}(G)$ is equienergetic with the Kronecker product $G \otimes K_{5m-1, 5m-1}$.
\end{corollary}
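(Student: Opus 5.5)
Following the proof of the previous corollary, I will compare the energies of the two graphs using the energy formula for $\mathcal{S}_{p,q}(G)$ and the multiplicativity $\E(G\otimes H)=\E(G)\E(H)$, and I will check separately that the orders agree.

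\textbf{Orders.} The graph $\mathcal{S}_{2m,8m-2}(G)$ has $(p+q)n=(2m+8m-2)n=(10m-2)n$ vertices. The Kronecker product $G\otimes K_{5m-1,5m-1}$ has $n\cdot 2(5m-1)=(10m-2)n$ vertices. So the orders coincide.

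\textbf{Energy of the generalized splitting graph.} Set $p=2m$ and $q=8m-2$ in the energy formula. Then
\[
1+4pq=1+8m(8m-2)=64m^2-16m+1=(8m-1)^2 .
\]
Since $m\ge 1$, we have $8m-1>0$, so $\sqrt{1+4pq}=8m-1$. Hence
\[
\E(\mathcal{S}_{2m,8m-2}(G))=(2m-1+8m-1)\E(G)=(10m-2)\E(G).
\]

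\textbf{Energy of the Kronecker product.} Using $\E(K_{a,b})=2\sqrt{ab}$ with $a=b=5m-1$ gives $\E(K_{5m-1,5m-1})=2(5m-1)=10m-2$. Multiplicativity then gives $\E(G\otimes K_{5m-1,5m-1})=(10m-2)\E(G)$.

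The two energies are equal and the orders agree, so the two graphs are equienergetic. The only step requiring care is recognising $1+4pq$ as the perfect square $(8m-1)^2$ and confirming $8m-1>0$, so that the square root is taken with the correct sign.
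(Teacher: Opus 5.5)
Your proposal is correct and follows essentially the same route as the paper: both orders equal $(10m-2)n$, the identity $1+4pq=(8m-1)^2$ gives $\E(\mathcal{S}_{2m,8m-2}(G))=(10m-2)\E(G)$, and multiplicativity with $\E(K_{5m-1,5m-1})=2(5m-1)$ gives the same value for the Kronecker product. Your explicit check that $8m-1>0$ before taking the square root is a small point the paper leaves implicit.
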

\begin{proof}
Let $H_1 = \mathcal{S}_{2m, 8m-2}(G)$ and $H_2 = G \otimes K_{5m-1, 5m-1}$. We first verify that the orders of both graphs are identical. The order of $H_1$, determined by the parameters $p=2m$ and $q=8m-2$, is given by $N_1 = (2m + 8m - 2)n = (10m-2)n$. For the Kronecker product $H_2$, the order is the product of the orders of the factor graphs, yielding $N_2 = (5m-1 + 5m-1)n = (10m-2)n$. Substituting the parameters $p$ and $q$ into the energy formula for the generalized splitting graph, we find that
\begin{align*}
\E(H_1) &= \left( \sqrt{1+4(2m)(8m-2)} + 2m - 1 \right) \E(G) \\
&= \left( \sqrt{1+64m^2-16m} + 2m - 1 \right) \E(G) \\
&= \left( \sqrt{(8m-1)^2} + 2m - 1 \right) \E(G) \\
&= (8m - 1 + 2m - 1) \E(G) \\
&= (10m - 2) \E(G).
\end{align*}
For the Kronecker product $H_2$, we utilize the multiplicative property of graph energy. we have
\begin{align*}
\E(H_2) &= \E(K_{5m-1, 5m-1}) \E(G) \\
&= 2\sqrt{(5m-1)(5m-1)}\; \E(G) \\
&= 2(5m-1) \E(G) \\
&= (10m-2) \E(G).
\end{align*}
Since the energy values are identical and both graphs share the same order, we conclude that $\E(H_1) = \E(H_2)$ for all $m \ge 1$.
\end{proof}
\begin{corollary}
For $m \ge 1$ , the graphs $\mathcal{S}_{3m+1, 12m+2}(G)$ and $\mathcal{H}_{5m+1, 10m+2}(G)$ are equienergetic.
\end{corollary}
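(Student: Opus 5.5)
Check orders: $(3m+1)+(12m+2)=15m+3$ and $(5m+1)+(10m+2)=15m+3$, so both graphs have order $(15m+3)n$.

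For the first graph, apply the energy theorem for $\mathcal{S}_{p,q}(G)$ with $p=3m+1$, $q=12m+2$. Then
\[
1+4pq=1+4(3m+1)(12m+2)=144m^2+72m+9=(12m+3)^2 .
\]
Hence
\[
\E(\mathcal{S}_{3m+1,12m+2}(G))=\bigl(3m+(12m+3)\bigr)\E(G)=(15m+3)\E(G).
\]

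For the second graph, apply the energy theorem for $\mathcal{H}_{c,k}(G)$ with $c=5m+1$, $k=10m+2=2c$. Then
\[
c^2+4ck=c^2+8c^2=9c^2,
\]
so
\[
\E(\mathcal{H}_{5m+1,10m+2}(G))=3(5m+1)\E(G)=(15m+3)\E(G).
\]

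The two energies agree, which proves the corollary. Two observations explain why: $q=4p-2$ and $k=2c$ are exactly the conditions of the earlier corollaries under which each graph is equienergetic with the shadow graph $D_{15m+3}(G)$. The only step needing care is confirming that $1+4pq$ is a perfect square, which the factorization above settles.
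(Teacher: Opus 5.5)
Your proof is correct and follows the paper's argument. Both check that the two graphs have order $(15m+3)n$, then substitute into the energy formulas for $\mathcal{S}_{p,q}(G)$ and $\mathcal{H}_{c,k}(G)$ to get $(15m+3)\E(G)$ in each case. The only cosmetic difference is that you use $k=2c$ to write $c^2+4ck=9c^2$, whereas the paper expands the expression to $(15m+3)^2$.
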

\begin{proof}
Let $H_1 = \mathcal{S}_{3m+1, 12m+2}(G)$ and $H_2 = \mathcal{H}_{5m+1, 10m+2}(G)$. The order of $H_1$ is given by $((3m+1) + (12m+2))n = (15m+3)n$, while the order of $H_2$ is $((5m+1) + (10m+2))n = (15m+3)n$. The energy for $H_1$ is calculated as
\begin{align*}
\E(H_1) &= \left( \sqrt{1 + 4(3m+1)(12m+2)} + (3m+1) - 1 \right) \E(G) \\
&= \left( \sqrt{144m^2 + 72m + 9} + 3m \right) \E(G) \\
&= \left( \sqrt{(12m+3)^2} + 3m \right) \E(G) \\
&= (12m + 3 + 3m) \E(G) \\
&= (15m + 3) \E(G).
\end{align*}
The energy for $H_2$ is given by
\begin{align*}
\E(H_2) &= \sqrt{(5m+1)^2 + 4(5m+1)(10m+2)}\; \E(G) \\
&= \sqrt{25m^2 + 10m + 1 + 200m^2 + 80m + 8}\; \E(G) \\
&= \sqrt{225m^2 + 90m + 9}\; \E(G) \\
&= \sqrt{(15m+3)^2}\; \E(G) \\
&= (15m + 3)\; \E(G).
\end{align*}
Since $\E(H_1) = \E(H_2)$ and both graphs share the same order, they are equienergetic for all $m \ge 1$.
\end{proof}
\begin{corollary}
For $t \ge 1$, the graphs $\mathcal{H}_{10t-4, 20t-8}(G)$, $D_{30t-12}(G)$, $K_{15t-6, 15t-6} \otimes G$, and $\mathcal{S}_{6t-2, 24t-10}(G)$ are mutually equienergetic.
\end{corollary}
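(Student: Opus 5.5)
The plan is to compute the order and the energy of each of the four graphs using the formulas already established, and to check that all four quantities coincide. In every case the energy should reduce to $(30t-12)\E(G)$ and the order to $(30t-12)n$, where $n$ is the order of $G$.

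\textbf{Orders.} The four orders are immediate.
\begin{itemize}
\item $\mathcal{H}_{10t-4,20t-8}(G)$ has $((10t-4)+(20t-8))n=(30t-12)n$ vertices.
\item $D_{30t-12}(G)$ consists of $30t-12$ copies of $G$, so it also has $(30t-12)n$ vertices.
\item $K_{15t-6,15t-6}\otimes G$ has $2(15t-6)\cdot n=(30t-12)n$ vertices.
\item $\mathcal{S}_{6t-2,24t-10}(G)$ has $((6t-2)+(24t-10))n=(30t-12)n$ vertices.
\end{itemize}

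\textbf{Energies.} The first three graphs are handled by results already available.
\begin{itemize}
\item For the shadow graph, $\E(D_m(G))=m\,\E(G)$ gives $(30t-12)\E(G)$ directly.
\item For the Kronecker product, $\E(K_{15t-6,15t-6}\otimes G)=\E(K_{15t-6,15t-6})\,\E(G)$ by multiplicativity. Since $\E(K_{a,a})=2a$, this equals $2(15t-6)\E(G)=(30t-12)\E(G)$.
\item For $\mathcal{H}_{10t-4,20t-8}(G)$, note that $k=20t-8=2c$ with $c=10t-4\ge 2$ for $t\ge1$. The earlier corollary says $\mathcal{H}_{c,k}(G)$ is equienergetic with $D_{c+k}(G)$ exactly when $k=2c$. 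Alternatively, one checks directly that $\sqrt{c^2+4ck}=\sqrt{9c^2}=3c=30t-12$.
\end{itemize}

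\textbf{The generalized splitting graph.} For $\mathcal{S}_{6t-2,24t-10}(G)$ I intended to use the corollary characterizing when $\mathcal{S}_{p,q}(G)$ is equienergetic with $D_{p+q}(G)$, namely $q=4p-2$. This is the one step that needs real care. With $p=6t-2$ we get $4p-2=24t-10$, which matches $q$. The direct check agrees: $1+4pq=1+8p(2p-1)=(4p-1)^2$, so the energy factor is
\[
p-1+(4p-1)=5p-2=30t-12.
\]
So all four energies equal $(30t-12)\E(G)$ and all four orders equal $(30t-12)n$, which gives the mutual equienergeticity. A non-isomorphism remark, if wanted, can be added by noting the different structures of the four constructions.
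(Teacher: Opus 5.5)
Your proof is correct and follows the paper's argument. You check that all four orders equal $(30t-12)n$ and that all four energies reduce to $(30t-12)\E(G)$ via the established energy formulas. Invoking the $k=2c$ and $q=4p-2$ corollaries is only a shortcut to the same direct computation, and your identity $1+4p(4p-2)=(4p-1)^2$ is a slightly cleaner route to the paper's $(24t-9)^2$.
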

\begin{proof}
We first establish that all four graphs share the same order $N = (30t-12)n$. The order of $H_1 = \mathcal{H}_{10t-4, 20t-8}(G)$ is $((10t-4) + (20t-8))n = (30t-12)n$. The shadow graph $H_2 = D_{30t-12}(G)$ has order $(30t-12)n$ by definition. For the Kronecker product $H_3 = K_{15t-6, 15t-6} \otimes G$, the order is $(15t-6 + 15t-6)n = (30t-12)n$. Finally, the generalized splitting graph $H_4 = \mathcal{S}_{6t-2, 24t-10}(G)$ has order $((6t-2) + (24t-10))n = (30t-12)n$. For $H_1$, applying the energy formula for the shadow-splitting construction yields
\begin{align*}
\E(H_1) &= \sqrt{(10t-4)^2 + 4(10t-4)(20t-8)}\; \E(G) \\
&= \sqrt{(10t-4)^2 + 8(10t-4)^2}\; \E(G) \\
&= \sqrt{9(10t-4)^2}\; \E(G) \\
&= (30t-12)\, \E(G).
\end{align*}
The energy of the shadow graph $H_2$ is immediately given by the established identity
\[
\E(H_2) = (30t-12) \E(G).
\]
For the Kronecker product $H_3$, we utilize the multiplicative property of energy and the fact that $\E(K_{r,r}) = 2r$. Setting $r = 15t-6$, we obtain
\[
\E(H_3) = \E(K_{15t-6, 15t-6}) \E(G) = 2(15t-6) \E(G) = (30t-12) \E(G).
\]
Finally, for the generalized splitting graph $H_4$, we substitute the parameters $p = 6t-2$ and $q = 24t-10$ into the splitting energy formula to find
\begin{align*}
\E(H_4) &= \left( \sqrt{1 + 4(6t-2)(24t-10)} + (6t-2) - 1 \right) \E(G) \\
&= \left( \sqrt{1 + 4(144t^2 - 108t + 20)} + 6t - 3 \right) \E(G) \\
&= \left( \sqrt{576t^2 - 432t + 81} + 6t - 3 \right) \E(G) \\
&= \left( \sqrt{(24t-9)^2} + 6t - 3 \right) \E(G) \\
&= (24t - 9 + 6t - 3) \E(G) \\
&= (30t - 12) \E(G).
\end{align*}
Since the energies of all four graphs are identical and they share the same order, they are mutually equienergetic.
\end{proof}
\section{Construction of Borderenergetic Graphs}
In this section, we apply the energy formulas for the generalized splitting and shadow-splitting constructions to identify specific parameters that yield new families of borderenergetic graphs.
\begin{corollary}
For $k \ge 1$, the generalized splitting graphs $\mathcal{S}_{k+1, k}(K_3)$ and $\mathcal{S}_{9k+6, k+1}(K_3)$ are borderenergetic.
\end{corollary}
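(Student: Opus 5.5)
We verify the borderenergetic condition directly from the energy formula of the first theorem of Section 3, $\E(\mathcal{S}_{p,q}(G)) = (p-1+\sqrt{1+4pq})\,\E(G)$, with $G = K_3$. Since $\E(K_3) = 4$ and $\mathcal{S}_{p,q}(K_3)$ has order $3(p+q)$, the condition $\E(\mathcal{S}_{p,q}(K_3)) = 2(3(p+q)-1)$ becomes
\[
4\big(p-1+\sqrt{1+4pq}\big) = 6(p+q) - 2,
\]
which is equivalent to
\[
2\sqrt{1+4pq} = p + 3q + 1 .
\]
So the whole proof reduces to checking that, for each of the two parameter families, $1+4pq$ is a perfect square whose root is exactly $(p+3q+1)/2$.

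For the first family, take $p=k+1$ and $q=k$. Then $1+4pq = 4k^2+4k+1 = (2k+1)^2$, so $\sqrt{1+4pq} = 2k+1$. This matches $(p+3q+1)/2 = (4k+2)/2 = 2k+1$. Hence the energy is $4(k + 2k+1) = 12k+4$, and the order gives $2(3(2k+1)-1) = 12k+4$, as required.

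For the second family, take $p = 9k+6$ and $q = k+1$. Then
\[
1+4pq = 1 + 4(9k^2+15k+6) = 36k^2+60k+25 = (6k+5)^2 .
\]
The right-hand side is $(p+3q+1)/2 = (12k+10)/2 = 6k+5$, which matches. Hence the energy is $4(9k+5+6k+5) = 60k+40$, and the order gives $2(3(10k+7)-1) = 60k+40$.

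The only step needing care is recognising the perfect squares, and both are immediate. In each case we present the computation as the energy evaluated to a closed form and then compared with $2(N-1)$, where $N$ is the order.
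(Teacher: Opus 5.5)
Your proof is correct and follows the paper's argument. Both substitute into $\E(\mathcal{S}_{p,q}(G))=(p-1+\sqrt{1+4pq})\,\E(G)$ with $\E(K_3)=4$, recognise $1+4pq$ as $(2k+1)^2$ and $(6k+5)^2$ respectively, and compare the result with $2(N-1)$; your preliminary reduction to $2\sqrt{1+4pq}=p+3q+1$ is just a repackaging of the same direct computation.
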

\begin{proof}
Let $H = \mathcal{S}_{p,q}(K_3)$ be a generalized splitting graph. The order of the base graph $K_3$ is $n=3$, and its energy is well known to be $\E(K_3) = 4$. The order of the constructed graph $H$ is $N = 3(p+q)$. To establish that $H$ is borderenergetic, we must demonstrate that its energy satisfies the condition $\E(H) = 2(N-1)$. 

We first consider the family $\mathcal{S}_{k+1, k}(K_3)$ with parameters $p = k+1$ and $q = k$. The order of this graph is $N = 3(k+1+k) = 6k+3$. The energy of a complete graph of order $N$ is 
\[
2(N-1) = 2(6k+3-1) = 12k+4.
\]
Applying the energy formula for the generalized splitting graph the energy of $H$ is calculated as
\begin{align*}
\E(H) &= \left( \sqrt{1 + 4(k+1)k} + (k+1) - 1 \right) \E(K_3) \\
&= \left( \sqrt{4k^2 + 4k + 1} + k \right) \cdot 4 \\
&= \left( \sqrt{(2k+1)^2} + k \right) \cdot 4 \\
&= (2k + 1 + k) \cdot 4 \\
&= (3k + 1) \cdot 4 \\
&= 12k + 4.
\end{align*}
Since $\E(H) = 2(N-1)$, the graph $\mathcal{S}_{k+1, k}(K_3)$ is borderenergetic for all $k \ge 1$.

Next, we evaluate the family $\mathcal{S}_{9k+6, k+1}(K_3)$ with parameters $p = 9k+6$ and $q = k+1$. The order of this graph is $N = 3(9k+6+k+1) = 30k+21$. The required energy for $H$ to be borderenergetic is given by
\[
2(N-1) = 2(30k+20) = 60k+40.
\]
Substituting the parameters into the generalized splitting energy formula yields
\begin{align*}
\E(H) &= \left( \sqrt{1 + 4(9k+6)(k+1)} + (9k+6) - 1 \right) \E(K_3) \\
&= \left( \sqrt{1 + 36k^2 + 60k + 24} + 9k + 5 \right) \cdot 4 \\
&= \left( \sqrt{36k^2 + 60k + 25} + 9k + 5 \right) \cdot 4 \\
&= \left( \sqrt{(6k+5)^2} + 9k + 5 \right) \cdot 4 \\
&= (6k + 5 + 9k + 5) \cdot 4 \\
&= 60k + 40.
\end{align*}
Since $\E(H) = 2(N-1)$, the graph $\mathcal{S}_{9k+6, k+1}(K_3)$ is borderenergetic for all $k \ge 1$.
\end{proof}
\begin{corollary}
For $t \ge 1$, the shadow-splitting graph $\mathcal{H}_{c,k}(K_r)$ is borderenergetic for the parameters $c = (t+1)^2, \quad k = t(2t+1) \quad \text{and} \quad r = 3t+4.$
\end{corollary}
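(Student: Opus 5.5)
We will apply the energy identity for the shadow-splitting graph, $\E(\mathcal{H}_{c,k}(G))=\sqrt{c^2+4ck}\,\E(G)$, with $G=K_r$ and $\E(K_r)=2(r-1)$. The order of $\mathcal{H}_{c,k}(K_r)$ is $N=(c+k)r$. Borderenergeticity therefore amounts to the single identity
\[
\sqrt{c^2+4ck}\cdot 2(r-1)=2\big((c+k)r-1\big).
\]
The plan is to compute both sides as explicit polynomials in $t$ after substituting $c=(t+1)^2$, $k=t(2t+1)$, $r=3t+4$.

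\textbf{Left side.} First, $c+k=t^2+2t+1+2t^2+t=3t^2+3t+1$. Next,
\[
c^2+4ck=c(c+4k)=(t+1)^2\big((t+1)^2+4t(2t+1)\big)=(t+1)^2(9t^2+6t+1)=(t+1)^2(3t+1)^2 .
\]
Hence $\sqrt{c^2+4ck}=(t+1)(3t+1)=3t^2+4t+1$. With $r-1=3t+3$, the energy becomes
\[
\E(\mathcal{H}_{c,k}(K_r))=2(3t^2+4t+1)(3t+3)=2(9t^3+21t^2+15t+3).
\]

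\textbf{Right side.} We have $(c+k)r-1=(3t^2+3t+1)(3t+4)-1=9t^3+21t^2+15t+3$. Multiplying by $2$ reproduces exactly the left side, which gives $\E(\mathcal{H}_{c,k}(K_r))=2(N-1)$.

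The only step with real content is recognizing that $c+4k$ is the perfect square $(3t+1)^2$, so that the square root is rational. After that the argument is a routine polynomial expansion, which is the part to double-check carefully.
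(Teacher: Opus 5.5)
Your proof is correct and follows essentially the same route as the paper. You both substitute into $\E(\mathcal{H}_{c,k}(G))=\sqrt{c^2+4ck}\,\E(G)$ with $\E(K_r)=2(r-1)$, observe that $c^2+4ck=(t+1)^2(3t+1)^2$, and expand to match $2(N-1)=2(9t^3+21t^2+15t+3)$.
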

\begin{proof}
Let $H = \mathcal{H}_{c,k}(K_r)$ be a graph of order $N = (c+k)r$. Utilizing the parameters provided, we first evaluate the order of the graph
\begin{align*}
c + k &= (t^2 + 2t + 1) + (2t^2 + t) = 3t^2 + 3t + 1, \\
N &= (3t^2 + 3t + 1)(3t + 4) = 9t^3 + 21t^2 + 15t + 4.
\end{align*}
The energy for complete graph of order $N$ is therefore
\[
2(N-1) = 2(9t^3 + 21t^2 + 15t + 3) = 18t^3 + 42t^2 + 30t + 6.
\]
We next calculate the energy of the construction using $\E(\mathcal{H}_{c,k}(G)) = \sqrt{c^2 + 4ck} \E(G)$. Since $G = K_r$, its energy is $\E(K_r) = 2(r-1) = 2(3t + 3) = 6(t+1)$. We first simplify the radical expression as follows
\begin{align*}
\sqrt{c^2 + 4ck} &= \sqrt{(t+1)^4 + 4(t+1)^2 t(2t+1)} \\
&= (t+1) \sqrt{(t+1)^2 + 8t^2 + 4t} \\
&= (t+1) \sqrt{9t^2 + 6t + 1} \\
&= (t+1)(3t + 1).
\end{align*}
Substituting these values back into the energy formula, we obtain
\begin{align*}
\E(H) &= [6(t+1)][(t+1)(3t+1)] \\
&= 6(t+1)^2(3t+1) \\
&= 6(t^2 + 2t + 1)(3t + 1) \\
&= 18t^3 + 42t^2 + 30t + 6.
\end{align*}
Since the calculated energy $\E(H)$ coincides with $2(N-1)$, we conclude that the graph family $\mathcal{H}_{c,k}(K_r)$ is borderenergetic for all $t \ge 1$.
\end{proof}
\begin{corollary}
For $t \ge 1$, let $G_t = t K_{3t+4} \cup K_{3t+5}$ be the disjoint union of \;$t$ copies of $K_{3t+4}$ and one copy of $K_{3t+5}$. Then the shadow-splitting graph $\mathcal{H}_{c,k}(G_t)$ is borderenergetic for the parameters $c = (t+1)^2$ and $k = t(2t+1)$.
\end{corollary}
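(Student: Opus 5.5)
The plan is to follow the same pattern as the preceding corollary for $\mathcal{H}_{c,k}(K_r)$: compute the order and the energy of the base graph $G_t$, apply the energy formula $\E(\mathcal{H}_{c,k}(G))=\sqrt{c^2+4ck}\,\E(G)$ from Section 4, and compare the result with $2(N-1)$, where $N=(c+k)|V(G_t)|$.

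\textbf{Step 1: the radical factor.} With $c=(t+1)^2$ and $k=t(2t+1)$, the computation from the previous corollary applies unchanged:
\[
\sqrt{c^2+4ck}=(t+1)(3t+1)=3t^2+4t+1,
\qquad c+k=3t^2+3t+1 .
\]

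\textbf{Step 2: order and energy of $G_t$.} The spectrum of a disjoint union is the union of the spectra, so energy is additive over components. Using $\E(K_r)=2(r-1)$, I get
\[
|V(G_t)|=t(3t+4)+(3t+5)=3t^2+7t+5,
\]
\[
\E(G_t)=t\cdot 2(3t+3)+2(3t+4)=6t^2+12t+8 .
\]
The second copy type $K_{3t+5}$ is what makes this work. Its energy $2(3t+4)$ exceeds that of $K_{3t+4}$ by exactly $2$, which is precisely the adjustment needed to cover the extra vertex in the "$-1$" bookkeeping.

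\textbf{Step 3: expand and compare.} The main (though routine) obstacle is the polynomial verification. On one side,
\[
\E(\mathcal{H}_{c,k}(G_t))=(3t^2+4t+1)(6t^2+12t+8)=18t^4+60t^3+78t^2+44t+8 .
\]
On the other side,
\[
N=(3t^2+3t+1)(3t^2+7t+5)=9t^4+30t^3+39t^2+22t+5,
\]
so
\[
2(N-1)=18t^4+60t^3+78t^2+44t+8 .
\]
The two quartics agree coefficient by coefficient, so $\E(\mathcal{H}_{c,k}(G_t))=2(N-1)$ for every $t\ge 1$, and $\mathcal{H}_{c,k}(G_t)$ is borderenergetic.
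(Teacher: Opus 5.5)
Your proposal is correct and follows essentially the same route as the paper: additivity over components gives $|V(G_t)|=3t^2+7t+5$ and $\E(G_t)=6t^2+12t+8$, the radical $\sqrt{c^2+4ck}=(t+1)(3t+1)$ is reused from the preceding corollary, and both $\E(\mathcal{H}_{c,k}(G_t))$ and $2(N-1)$ expand to $18t^4+60t^3+78t^2+44t+8$. Your heuristic comment about $K_{3t+5}$ is only loosely accurate; the actual balancing is that $\sqrt{c^2+4ck}-(c+k)=t$ offsets the $t$ extra ``$-2$'' terms that come from having $t+1$ components. The verification does not depend on that comment, so the proof stands.
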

\begin{proof}
The order of $G_t$ is given by
\[
n = t(3t+4) + (3t+5) = 3t^2 + 7t + 5.
\]
The energy of $G_t$ is calculated as
\begin{align*}
\E(G_t) &= t \cdot \E(K_{3t+4}) + \E(K_{3t+5}) \\
&= t[2(3t+3)] + 2(3t+4) \\
&= 6t^2 + 6t + 6t + 8 \\
&= 6t^2 + 12t + 8.
\end{align*}
Let $H = \mathcal{H}_{c,k}(G_t)$ be the shadow-splitting graph of order $N = (c+k)n$. Substituting the parameters $c$ and $k$, we find that $c+k = 3t^2 + 3t + 1$. The order of $H$ is therefore
\begin{align*}
N &= (3t^2 + 3t + 1)(3t^2 + 7t + 5) \\
&= 9t^4 + 30t^3 + 39t^2 + 22t + 5.
\end{align*}
The required energy for $H$ to be borderenergetic is $2(N-1)$, which simplifies to
\[
2(N-1) = 2(9t^4 + 30t^3 + 39t^2 + 22t + 4) = 18t^4 + 60t^3 + 78t^2 + 44t + 8.
\]
Next, we evaluate the energy of $H$ using the identity $\E(\mathcal{H}_{c,k}(G_t)) = \sqrt{c^2+4ck}\,\E(G_t)$. As established in previous sections, the radical expression for these specific values of $c$ and $k$ simplifies to $(t+1)(3t+1)$. Thus, the energy of $H$ is
\begin{align*}
\E(H) &= (t+1)(3t+1)(6t^2 + 12t + 8) \\
&= (3t^2 + 4t + 1)(6t^2 + 12t + 8) \\
&= 18t^4 + 36t^3 + 24t^2 + 24t^3 + 48t^2 + 32t + 6t^2 + 12t + 8 \\
&= 18t^4 + 60t^3 + 78t^2 + 44t + 8.
\end{align*}
Since the energy of $H$ is exactly $2(N-1)$, the graph $\mathcal{H}_{c,k}(G_t)$ is borderenergetic for all $t \ge 1$.
\end{proof}

\section{Conclusion}
This paper introduced two novel graph operations, the generalized splitting graph $\mathcal{S}_{p,q}(G)$ and the shadow-splitting graph $\mathcal{H}_{c,k}(G)$, and characterized their adjacency energies. Utilizing these constructions, we established several new families of equienergetic and borderenergetic graphs. Future investigations may extend these spectral investigations to the Laplacian and signless Laplacian matrices to further explore the structural invariants of these constructions.

\end{document}